\newtheorem{theorem}{Theorem}[section]
\newtheorem{corollary}[theorem]{Corollary}
\newtheorem{proposition}[theorem]{Proposition}
\theoremstyle{definition}
\newtheorem{example} [theorem] {Example}
\newtheorem{remark} [theorem] {Remark}
\numberwithin{equation}{section}
\newcommand{\A}{\mathbb{A}}
\newcommand{\K}{\mathbb{K}}
\newcommand{\PP}{\mathbb{P}}
\newcommand{\Q}{\mathbb{Q}}
\newcommand{\R}{\mathbb{R}}
\newcommand{\Z}{\mathbb{Z}}
\begin{document}

\title{Multi-Rees Algebras and Toric Dynamical Systems}

\author{David A. Cox}
\address{Department of Mathematics \& Statistics, Amherst College, Amherst, MA 01002}
\email{dacox@amherst.edu}

\author{Kuei-Nuan Lin}
\address{The Penn State University, Greater Allegheny Campus, McKeesport, PA 15132}
\email{kul20@psu.edu}

\author{Gabriel Sosa}
\address{Department of Mathematics \& Statistics, Amherst College, Amherst, MA 01002}
\email{gsosa@amherst.edu}

\subjclass[2010]{Primary 13A30, 92C42}

\date{\today}

\commby{Claudia Polini}

\begin{abstract}
This paper explores the relation between multi-Rees algebras and ideals that arise in the study of toric dynamical systems from the theory of chemical reaction networks.   
\end{abstract}
\keywords{Rees algebra, multi-Rees algebra, toric ideal, chemical reaction network, toric dynamical system} 
\subjclass[2010]{13A30, 92C42}

\date{\today}
\maketitle

\section*{Introduction}
\label{introduction}

Rees algebras are a well-established subject in commutative algebra, going back to the 1960s, while the interactions between chemical reaction networks and algebraic geometry are more recent, beginning with the pioneering work of Karin Gatermann in the early 2000s (see, for example, \cite{gatermann4}).  The goal of this paper is to explore some links between these two fields.

Given a commutative ring $R$ and a collection of ideals $I_1$, $I_2$, $\dots$, $I_l$ of $R$, the \textit{multi-Rees algebra of} $I_1$, $I_2$, $\dots$, $I_l$ (which is also the Rees algebra of the module $M=I_1{\oplus} \cdots {\oplus} I_l$) is defined as the multigraded $R$-algebra:
\[
\mathcal{R}_R(I_1{\oplus} \cdots {\oplus} I_l) = \bigoplus_{a_1,\dots,a_l \ge 0} I_1^{a_1} \cdots I_l^{a_l} t_1^{a_1}\cdots t_l^{a_l} \subseteq R[t_1,\dots,t_l]
\]
for auxiliary variables $t_1,\dots,t_l$. In the particular case when $l=1$, this gives the classical Rees algebra of an ideal $I$ in a commutative ring $R$, often written 
\[
\mathcal{R}_R(I) = R \oplus I \oplus I^2 \oplus I^3 \oplus \cdots
\]
or $\mathcal{R}_R(I) = R[It] \subseteq R[t]$. Some recent papers on multi-Rees algebras include \cite{BC,J,LinCM,SosaMulti}. 

In the study of chemical reaction networks, the toric methods
introduced by Gatermann were formalized in the 2009 paper \emph{Toric
  Dynamical Systems} \cite{TDS}.  We also recommend the paper \cite{MCA} for more on the algebraic geometry of chemical and biochemical reaction networks.

Our main result is Theorem~\ref{TTG}, which shows that if $G$ is the digraph associated to a chemical reaction network with $l$ strongly connected components then the toric ideal $T_G$, defined in \cite{TDS}, is the defining ideal of the multi-Rees algebra $\mathcal{R}_R(I_1{\oplus} \cdots {\oplus} I_l)$, with $I_i$ the monomial ideal generated by the monomials representing the chemical complexes involved in the $i^{\text{th}}$ strongly connected component of $G$. 

The paper is organized as follows.  In Section~\ref{notation}, we set up our notation for the multi-Rees algebra of a collection of monomial ideals and introduce the \emph{defining ideal} of a multi-Rees algebra.  Section~\ref{results} gives an explicit formula for the defining ideal and proves that it is a toric ideal.  In Section~\ref{application}, we recall the definition of a chemical reaction network and apply the results of the previous section to prove Theorem~\ref{TTG}.  We also explore the effect of adding edges to the network and discuss some further results and definitions from \cite{TDS}.  Then Section~\ref{cayley} relates the Cayley matrix to the toric ideal $T_G$ and the $R$-algebra generators of the multi-Rees algebra.  Finally, in Section~\ref{special}, we recall the \emph{special fiber} of a multi-Rees algebra and study its relation to the moduli ideal $M_G$ defined in \cite{TDS}.

\section{Notation and Definitions}
\label{notation}

 Fix a field $\K$ and consider the polynomial ring $R = \K[\mathbf{x}]$ for variables $\mathbf{x} = (x_1,\dots,x_s)$.  Assume that we have exponent vectors $\mathbf{v}_1,\dots,\mathbf{v}_n \in \Z_{\ge0}^s$, and a partition $\{1,\dots,n\} = A_1 \cup \cdots \cup A_l$ with $A_k \ne \emptyset$ 
 for $k = 1,\dots,l$.  We denote the partition by $\mathcal{A} = \{A_k\}_{k=1}^l$.  This gives monomial ideals
\begin{equation}
\label{monideals}
I_k = \langle \mathbf{x}^{\mathbf{v}_i} \mid i \in A_k\rangle \subseteq R,\quad k = 1,\dots,l.
\end{equation}
For auxiliary variables $\mathbf{t}=(t_1,\dots,t_l)$, the \emph{multi-Rees algebra of $I_1,\dots,I_l$} is the multigraded $R$-algebra
\[
\mathcal{R}_R(I_1{\oplus} \cdots {\oplus}I_l) = \bigoplus_{a_1,\dots,a_l \ge 0} I_1^{a_1} \cdots I_l^{a_l} t_1^{a_1}\cdots t_l^{a_l} \subseteq R[\mathbf{t}].
\]

We construct a presentation of $\mathcal{R}_R(I_1{\oplus} \cdots {\oplus}I_l)$ in the usual way.  For variables $\mathbf{K} = (K_1,\dots,K_n)$, consider the $R$-algebra homomorphism
\begin{equation}
\label{phidef}
\varphi : \K[\mathbf{x},\mathbf{K}] = R[\mathbf{K}] \longrightarrow R[\mathbf{t}]
\end{equation}
that is the identity on $R$ and maps $K_i$ to $\mathbf{x}^{\mathbf{v}_i} t_k$ when $i \in A_k$.  It is easy to see that the image of $\varphi$ is the multi-Rees algebra, so that $\varphi$ induces an $R$-algebra isomorphism
\[
\mathcal{R}_R(I_1{\oplus} \cdots {\oplus}I_l) \simeq \K[\mathbf{x},\mathbf{K}] / \ker(\varphi).
\]
We call $\ker(\varphi) \subseteq \K[\mathbf{x},\mathbf{K}]$ the \emph{defining ideal of the multi-Rees algebra}.

\section{The Main Results}
\label{results}

Our main object of study is the ideal
\[
T_{\mathcal{A}} = \langle K_i \mathbf{x}^{\mathbf{v}_j} - K_j \mathbf{x}^{\mathbf{v}_i} \mid i,j \in A_k \text{ for some } k\rangle : (x_1\cdots x_s)^\infty \subseteq \K[\mathbf{x},\mathbf{K}].
\]

\begin{theorem}
\label{TisRees}
$T_{\mathcal{A}} \subseteq \K[\mathbf{x},\mathbf{K}]$ is the defining ideal of $\mathcal{R}_R(I_1{\oplus} \cdots {\oplus}I_l)$.
\end{theorem}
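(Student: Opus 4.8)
The plan is to show the two inclusions $T_{\mathcal{A}} \subseteq \ker(\varphi)$ and $\ker(\varphi) \subseteq T_{\mathcal{A}}$ separately.

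\textbf{The easy inclusion.} First I would check that each generator $K_i \mathbf{x}^{\mathbf{v}_j} - K_j \mathbf{x}^{\mathbf{v}_i}$ with $i,j \in A_k$ lies in $\ker(\varphi)$: applying $\varphi$ gives $\mathbf{x}^{\mathbf{v}_i} t_k \cdot \mathbf{x}^{\mathbf{v}_j} - \mathbf{x}^{\mathbf{v}_j} t_k \cdot \mathbf{x}^{\mathbf{v}_i} = 0$. Hence the ideal $J = \langle K_i \mathbf{x}^{\mathbf{v}_j} - K_j \mathbf{x}^{\mathbf{v}_i} \mid i,j \in A_k \text{ for some } k\rangle$ is contained in $\ker(\varphi)$. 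Since $\varphi$ is the identity on $R = \K[\mathbf{x}]$, the quotient $\K[\mathbf{x},\mathbf{K}]/\ker(\varphi)$ is a subalgebra of $R[\mathbf{t}]$, which is a domain; in particular $\ker(\varphi)$ is prime, and each $x_\ell$ is a nonzerodivisor modulo $\ker(\varphi)$. Therefore $J : (x_1\cdots x_s)^\infty \subseteq \ker(\varphi) : (x_1\cdots x_s)^\infty = \ker(\varphi)$, giving $T_{\mathcal{A}} \subseteq \ker(\varphi)$.

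\textbf{The hard inclusion.} For $\ker(\varphi) \subseteq T_{\mathcal{A}}$, the point is to show that modulo $J$, after inverting the variables $x_\ell$, every element of $\ker(\varphi)$ is already accounted for. I would argue that the induced map $\bar\varphi : \K[\mathbf{x},\mathbf{K}]/T_{\mathcal{A}} \to R[\mathbf{t}]$ is injective. Pass to the localization $\K[\mathbf{x}^{\pm1}, \mathbf{K}]$: here the relations $K_i \mathbf{x}^{\mathbf{v}_j} - K_j \mathbf{x}^{\mathbf{v}_i} = 0$ for $i,j\in A_k$ become $K_i/\mathbf{x}^{\mathbf{v}_i} = K_j/\mathbf{x}^{\mathbf{v}_i}$ up to units — more precisely, they let us write, for each $k$, all $K_i$ with $i \in A_k$ in terms of a single chosen generator (say $K_{i_k}$ for a fixed $i_k \in A_k$) via $K_i = (\mathbf{x}^{\mathbf{v}_i}/\mathbf{x}^{\mathbf{v}_{i_k}}) K_{i_k}$. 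Thus $\K[\mathbf{x}^{\pm1},\mathbf{K}]/J\K[\mathbf{x}^{\pm1},\mathbf{K}] \cong \K[\mathbf{x}^{\pm1}][K_{i_1},\dots,K_{i_l}]$, a polynomial ring, hence a domain; and under $\varphi$, $K_{i_k} \mapsto \mathbf{x}^{\mathbf{v}_{i_k}} t_k$, so these $l$ images are algebraically independent over $\K[\mathbf{x}^{\pm1}]$. Therefore $J\K[\mathbf{x}^{\pm1},\mathbf{K}]$ is already prime and equals $\ker(\varphi)\K[\mathbf{x}^{\pm1},\mathbf{K}]$. Contracting back to $\K[\mathbf{x},\mathbf{K}]$, the contraction of $J\K[\mathbf{x}^{\pm1},\mathbf{K}]$ is exactly $J : (x_1\cdots x_s)^\infty = T_{\mathcal{A}}$, while the contraction of $\ker(\varphi)\K[\mathbf{x}^{\pm1},\mathbf{K}]$ contains $\ker(\varphi)$; combining these yields $\ker(\varphi) \subseteq T_{\mathcal{A}}$.

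The main obstacle is making the localization argument fully rigorous, in particular verifying that $J\K[\mathbf{x}^{\pm1},\mathbf{K}]$ really is the full kernel of the localized $\varphi$ and not merely contained in it — i.e., that after eliminating the redundant $K_i$'s nothing further is forced. This reduces to the concrete fact that $\mathbf{x}^{\mathbf{v}_{i_1}}t_1,\dots,\mathbf{x}^{\mathbf{v}_{i_l}}t_l$ are algebraically independent over $\K[\mathbf{x}^{\pm1}]$ inside $R[\mathbf{t}]$, which is clear since distinct $t_k$'s appear. The other point requiring a little care is the commutation of the saturation $(-) : (x_1\cdots x_s)^\infty$ with contraction from the localization, but this is standard. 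I would also double-check that $\varphi$ does indeed surject onto the multi-Rees algebra, as asserted in Section~\ref{notation}, since the argument uses that $\K[\mathbf{x},\mathbf{K}]/\ker(\varphi)$ embeds in the domain $R[\mathbf{t}]$.
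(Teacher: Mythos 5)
Your proposal is correct and follows essentially the same route as the paper: both directions hinge on inverting $x_1\cdots x_s$, using the generators $K_i\mathbf{x}^{\mathbf{v}_j}-K_j\mathbf{x}^{\mathbf{v}_i}$ to rewrite every $K_i$ with $i\in A_k$ in terms of a fixed $K_{i_k}$, and then using that $\mathbf{x}^{\mathbf{v}_{i_1}}t_1,\dots,\mathbf{x}^{\mathbf{v}_{i_l}}t_l$ are algebraically independent over $\K[\mathbf{x}^{\pm1}]$. The paper packages this at the level of elements (the substitution \eqref{Ksubstitution}, clearing denominators to land in $T'_{\mathcal{A}}$, and an explicit injectivity check on $\K[\mathbf{x},K_{i_1},\dots,K_{i_l}]$) rather than via primality and contraction of localized ideals, but the mathematical content is the same.
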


\begin{proof}
We prove $T_{\mathcal{A}}= \ker(\varphi)$ as follows.  ($\subseteq$) Since $\varphi(K_i) = \mathbf{x}^{\mathbf{v}_i} t_k$ when $i \in A_k$, the inclusion
\begin{equation}
\label{Tprimedef}
T'_{\mathcal{A}} :=  \langle K_i \mathbf{x}^{\mathbf{v}_j} - K_j \mathbf{x}^{\mathbf{v}_i} \mid i,j \in A_k \text{ for some } k\rangle \subseteq \ker(\varphi)
\end{equation}
is immediate.  Now take $f \in T_{\mathcal{A}}$.  Then $(x_1\cdots x_s)^N f \in T'_{\mathcal{A}}$ for some $N$, so that by the above inclusion,
\[
0 = \varphi((x_1\cdots x_s)^N f) = (x_1\cdots x_s)^N \varphi(f)
\]
in $R[\mathbf{t}] = \K[\mathbf{x},\mathbf{t}]$.  This implies $\varphi(f) = 0$, so $f \in \ker(\varphi)$.

($\supseteq$) We begin with some simple algebra.  For each $k =1,\dots,l$, pick $i_k \in A_k$ and observe that 
\begin{equation}
\label{Ksubstitution}
K_i = \frac{1}{\mathbf{x}^{\mathbf{v}_{i_k}}}\big(
K_i \mathbf{x}^{\mathbf{v}_{i_k}} - K_{i_k} \mathbf{x}^{\mathbf{v}_i}\big) + K_{i_k} \mathbf{x}^{\mathbf{v}_i - \mathbf{v}_{i_k}}. 
\end{equation}
Now suppose we have $f \in \K[\mathbf{x},\mathbf{K}]$.  Working in the partial Laurent polynomial ring $\K[\mathbf{x}^{\pm1},\mathbf{K}]$, we can use the substitution \eqref{Ksubstitution} for all $i \in A_k\setminus \{i_k\}$ to obtain an expression of the form
\[
f = \sum_{i=1}^l \sum_{i \in A_k\setminus\{i_k\}} b_i (K_i \mathbf{x}^{\mathbf{v}_{i_k}} - K_{i_k} \mathbf{x}^{\mathbf{v}_i}) + r,
\]
where $b_i \in \K[\mathbf{x}^{\pm1},\mathbf{K}]$ and $r \in \K[\mathbf{x}^{\pm1},K_{i_1},\dots,K_{i_l}]$.  Multiplying by a suitable power of $x_1\cdots x_s$ to clear denominators gives
\begin{equation}
\label{fequation1}
(x_1 \cdots x_s)^N f = \sum_{i \in A_k\setminus\{i_k\}} c_i (K_i \mathbf{x}^{\mathbf{v}_{i_k}} - K_{i_k} \mathbf{x}^{\mathbf{v}_i}) + s,
\end{equation}
where $c_i \in \K[\mathbf{x},\mathbf{K}]$ and $s \in \K[\mathbf{x},K_{i_1},\dots,K_{i_l}]$.

Now suppose that $f \in \ker(\varphi)$.  Applying $\varphi$ to \eqref{fequation1} implies that $\varphi(s) = 0$.  However,  the map $\varphi$ is injective on $\K[\mathbf{x},K_{i_1},\dots,K_{i_l}]$.  To see why, suppose that $g \in \K[\mathbf{x},K_{i_1},\dots,K_{i_l}]$ satisfies $\varphi(g) = 0$.  If we write $g$ as
\[
g = \sum_{a_1,\dots,a_l \ge 0} g_{a_1,\dots,a_l}(\mathbf{x}) K_{i_1}^{a_1}\cdots K_{i_l}^{a_l},
\]
then $\varphi(K_{i_k}) = \mathbf{x}^{\mathbf{v}_{i_k}} t_k$ gives
\begin{align*}
0 = \varphi(g) &=  \sum_{a_1,\dots,a_l \ge 0} g_{a_1,\dots,a_l}(\mathbf{x}) (\mathbf{x}^{\mathbf{v}_{i_1}} t_{1})^{a_1}\cdots  (\mathbf{x}^{\mathbf{v}_{i_l}}t_{l})^{a_l}\\
&=  \sum_{a_1,\dots,a_l \ge 0} g_{a_1,\dots,a_l}(\mathbf{x}) \,\mathbf{x}^{\sum_j a_j\mathbf{v}_{i_j}} \, t_{1}^{a_1}\cdots  t_{l}^{a_l},
\end{align*}
which implies that $g_{a_1,\dots,a_l}(\mathbf{x}) \,\mathbf{x}^{\sum_j a_j\mathbf{v}_{i_j}} = 0$ for all $a_1,\dots,a_l \ge 0$.  It follows that $g = 0$, proving injectivity.  In particular, $\varphi(s) = 0$ implies $s = 0$, so that \eqref{fequation1} becomes
\[
(x_1 \cdots x_s)^N f = \sum_{i \in A_k\setminus\{i_k\}} c_i (K_i \mathbf{x}^{\mathbf{v}_{i_k}} - K_{i_k} \mathbf{x}^{\mathbf{v}_i}) \in T'_{\mathcal{A}},
\]
from which we conclude that $f \in T'_{\mathcal{A}} : (x_1 \cdots x_s)^\infty = T_{\mathcal{A}}$.
\end{proof}

\begin{corollary}
\label{TisToric}
$T_{\mathcal{A}}$ is a toric ideal.
\end{corollary}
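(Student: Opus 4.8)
The plan is to observe that the $R$-algebra homomorphism $\varphi$ from \eqref{phidef} is itself a monomial parametrization, so its kernel is a toric ideal essentially by definition, and then to invoke Theorem~\ref{TisRees}. Concretely, $\varphi$ maps each of the $s+n$ variables $x_1,\dots,x_s,K_1,\dots,K_n$ of $\K[\mathbf{x},\mathbf{K}]$ to a monomial in the $s+l$ variables of $R[\mathbf{t}] = \K[\mathbf{x},\mathbf{t}]$, since $\varphi(x_j) = x_j$ and $\varphi(K_i) = \mathbf{x}^{\mathbf{v}_i} t_k$ for $i \in A_k$. Collecting the exponent vectors of these monomials into the columns of an $(s+l)\times(s+n)$ integer matrix $B$ --- the column of $x_j$ being $(\mathbf{e}_j,\mathbf{0}) \in \Z^s\times\Z^l$ and the column of $K_i$ (for $i \in A_k$) being $(\mathbf{v}_i,\mathbf{e}_k)$ --- one has $\ker(\varphi) = I_B$, the toric ideal associated to $B$. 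Note that we do not need $\varphi$ to be surjective: its image is the multi-Rees algebra, in general a proper subalgebra of $\K[\mathbf{x},\mathbf{t}]$, but kernels of monomial maps are toric ideals regardless.

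The one point worth checking is that this matches the standard notion of a toric ideal. Because each $\mathbf{v}_i \in \Z_{\ge0}^s$, all the images are genuine monomials, so $\varphi$ factors as $\K[\mathbf{x},\mathbf{K}] \to \K[\mathbf{x},\mathbf{t}] \hookrightarrow \K[\mathbf{x}^{\pm1},\mathbf{t}^{\pm1}]$, and the kernel is unchanged by the inclusion; thus $\ker(\varphi)$ is the kernel of a map into a Laurent polynomial ring induced by the lattice homomorphism $B$. In particular $\ker(\varphi)$ is prime and generated by binomials, i.e.\ it is a toric ideal in the usual sense.

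Finally, Theorem~\ref{TisRees} identifies $T_{\mathcal{A}}$ with $\ker(\varphi)$, hence $T_{\mathcal{A}} = I_B$ is toric. There is no real obstacle here: all of the substance is in Theorem~\ref{TisRees}, and the corollary only requires the observation that $\varphi$ is defined by monomials.
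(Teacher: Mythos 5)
Your proof is correct, but it takes a genuinely different route from the one the paper uses for this corollary. The paper obtains primality exactly as you do (from Theorem~\ref{TisRees}, since $\ker(\varphi)$ is the kernel of a map to a domain), but for binomial generation it works directly with the saturation: it adjoins a variable $y$, forms $T''_{\mathcal{A}} = \langle yx_1\cdots x_s - 1,\ K_i\mathbf{x}^{\mathbf{v}_j}-K_j\mathbf{x}^{\mathbf{v}_i}\rangle$, notes that $T_{\mathcal{A}} = T''_{\mathcal{A}}\cap\K[\mathbf{x},\mathbf{K}]$, and extracts binomial generators from a reduced Gr\"obner basis via the Elimination Theorem. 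You instead invoke the structure theory of toric ideals: $\varphi$ sends every variable to a monomial, so $\ker(\varphi)$ is the toric (lattice) ideal $I_B$ of the integer matrix $B$ you describe, hence prime and generated by binomials; your factoring of $\varphi$ through the Laurent polynomial ring is the right way to make this precise, and you are also right that the non-surjectivity of $\varphi$ onto $\K[\mathbf{x},\mathbf{t}]$ is irrelevant. Both arguments are sound, and both place all the real work in Theorem~\ref{TisRees}. The paper's Gr\"obner-basis argument is self-contained and constructive --- it tells you how to actually compute a binomial generating set of the saturation --- while yours is shorter, leans on the standard fact that kernels of monomial maps are toric, and has the bonus of exhibiting $T_{\mathcal{A}}$ explicitly as the toric ideal of a concrete matrix; indeed, your matrix $B$ is precisely the ``modified Cayley matrix'' of Proposition~\ref{cayleytoric}(2), so your argument anticipates (and essentially reproves) that later result.
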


\begin{proof} 
We need to prove that $T_{\mathcal{A}}$ is prime and generated by binomials.  Primality follows from Theorem~\ref{TisRees} since $T_{\mathcal{A}}= \ker{\varphi}$ is the kernel of a map to an integral domain.  It remains to find binomial generators for $T_{\mathcal{A}}$.  To do this, introduce a new variable $y$ and set 
\[
T''_{\mathcal{A}} = \langle yx_1\cdots x_s-1,K_i \mathbf{x}^{\mathbf{v}_j} - K_j \mathbf{x}^{\mathbf{v}_i} \mid i,j \in A_k \text{ for some } k\rangle \subseteq \K[\mathbf{x},\mathbf{K},y].
\]
This ideal has binomial generators.  Applying the division algorithm and the Buchberger algorithm to these generators, we see that any reduced Gr\"obner basis $\mathcal{G}$ of $T''_{\mathcal{A}}$ consists of binomials.  However, it is well known that
\[
T_{\mathcal{A}}= T'_{\mathcal{A}} : (x_1\cdots x_s)^\infty = T''_{\mathcal{A}} \cap \K[\mathbf{x},\mathbf{K}].
\]
For any monomial order on $\K[\mathbf{x},\mathbf{K},y]$ that eliminates $y$, the Elimination Theorem tells us that $\mathcal{G} \cap \K[\mathbf{x},\mathbf{K}]$ is the desired binomial generating set of $T$.
\end{proof}

\section{Application to Toric Dynamical Systems}
\label{application}

The paper \emph{Toric dynamical systems} \cite{TDS} attaches various toric ideals to a chemical reaction network.  Such a network is defined by a directed graph $G = (V,E)$ with vertex set $V = \{1,\dots,n\}$.  We assume that $G$ is \textit{weakly reversible}, which means that every connected component of the underlying graph is strongly connected as a directed graph, i.e., given any two vertices $i, j \in V$ there is a directed path from $i$ to $j$ and a directed path from $j$ to $i$.  Then:
\begin{itemize}
\item Each directed edge $i{\to}j$ of $G$ represents a chemical or biochemical reaction with reaction rate $\kappa_{ij}$.
\item Each vertex $i$ of $G$ supports an exponent vector $\mathbf{v}_i \in \Z_{\ge0}^s$ that explains how the vertex is built from molecules or cells called \emph{species} in the literature.  It is customary to assemble the $\mathbf{v}_i$ into an $s \times n$ matrix $Y$ with columns $\mathbf{v}_1,\dots,\mathbf{v}_n$.
\item The concentrations of the species are represented by the variables $x_1,\dots,x_s$.
\end{itemize}
Here is a classic example due to Edelstein \cite{edelstein} which has been studied by many authors, including \cite{feinberglectures,gatermann4}. 

\begin{example}
\label{Ex1}
Consider the reaction network
\begin{equation}
\label{Ex1network}
\begin{matrix} \ce{A <=>[\kappa_{12}][\kappa_{21}] 2A} \\ \ce{A + B <=>[\kappa_{34}][\kappa_{43}] C <=>[\kappa_{45}][\kappa_{54}] B}\end{matrix} 
\qquad \ Y = \begin{pmatrix}1 & \!2 & \!1 & \!0 & \!0\\ 0 & \!0 & \!1 & \!0 & \!1\\ 0 & \!0 & \!0 & \!1 & \!0\end{pmatrix}.
\end{equation}
Here, we have three species $\ce{A},\ce{B},\ce{C}$, and the columns of $Y$ show which combinations of the species appear at each vertex of the directed graph $G$.  The variables $x_1,x_2,x_3$ give the respective concentrations of $\ce{A},\ce{B},\ce{C}$, and the corresponding monomials are
\begin{equation}
\label{Ex1monomials}
\mathbf{x}^{\mathbf{v}_1} = x_1,\ \mathbf{x}^{\mathbf{v}_2} = x_1^2, \ \mathbf{x}^{\mathbf{v}_3} = x_1 x_2, \ \mathbf{x}^{\mathbf{v}_4} = x_3,\ \mathbf{x}^{\mathbf{v}_5} = x_2.
\end{equation}
The graph $G$ has two connected components, each of which is strongly connected.  In the notation of Section~\ref{notation}, we have the partition $V = \{1,2\} \cup \{3,4,5\} = A_1 \cup A_2$, and the resulting monomial ideals are
\[
I_1 = \langle x_1,x_1^2\rangle = \langle x_1\rangle, \ I_2 = \langle x_1x_2, x_3,x_2\rangle = \langle x_2,x_3\rangle.
\]
The monomials in \eqref{Ex1monomials} are non-minimal generators of $I_1, I_2$ and hence give a rather inefficient presentation of the multi-Rees algebra $\mathcal{R}_R[I_1{\oplus} I_2]$ for $R = \Q[x_1,x_2,x_3]$.  But for the purposes of understanding the chemistry of the network \eqref{Ex1network}, the presentation coming from \eqref{Ex1monomials} is the one we want.
\end{example}

The notation in our paper is similar to the notation of \cite{TDS}, except that the variables $c_i$  and exponent vectors $y_i$ of \cite{TDS} are denoted $x_i$ and  $\mathbf{v}_i$ respectively.  Also, we use an arbitrary field $\K$ while  \cite{TDS} works primarily over $\Q$ when doing algebra.

\subsection{The Toric Ideal}  The paper \cite{TDS} first defines the ideal $T_G \subseteq \K[\mathbf{x},\mathbf{K}]$ in the special case when $G$ is strongly connected:
\[
T_G = \langle K_i \mathbf{x}^{\mathbf{v}_j} - K_j \mathbf{x}^{\mathbf{v}_i}\rangle : (x_1\cdots x_s)^\infty \subseteq \K[\mathbf{x},\mathbf{K}].
\]
In general, $G$ will have connected components $G_1,\dots,G_l$, each of which is strongly connected by our hypothesis of weak reversibility.  Then $T_G$ is defined to be
\begin{equation}
\label{TGdef}
T_G = (T_{G_1} + \cdots + T_{G_l}) : (x_1\cdots x_s)^\infty \subseteq \K[\mathbf{x},\mathbf{K}].
\end{equation}
The ideals $T_G$ relate nicely to the ideals $T_\mathcal{A}$ defined in Section~\ref{results} as follows.

\begin{proposition} \
\label{TGisTApartition} 
\begin{enumerate}
\item Given any reaction network with directed graph $G$, we have $T_G = T_\mathcal{A}$, where $\mathcal{A}$ is the partition of the vertex set $V = \{1,\dots,n\}$ induced by the connected components of $G$.
\item Given any partition $\mathcal{A} = \{A_k\}_{k=1}^l$ of $\{1,\dots,n\}$, we have $T_\mathcal{A} = T_G$, where $G$ is the directed graph whose connected component $G_k$ is the complete directed graph with vertex set $A_k$.
\end{enumerate}
\end{proposition}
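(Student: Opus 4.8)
The plan is to reduce both parts to one elementary fact about saturation together with a telescoping identity among the binomials $K_i\mathbf{x}^{\mathbf{v}_j}-K_j\mathbf{x}^{\mathbf{v}_i}$. First I would record the saturation fact: for ideals $I,J$ in a Noetherian ring and an element $f$, one has $\bigl((I:f^\infty)+J\bigr):f^\infty = (I+J):f^\infty$, and therefore, iterating, $\bigl(\sum_k (I_k:f^\infty)\bigr):f^\infty = \bigl(\sum_k I_k\bigr):f^\infty$ for finitely many ideals $I_k$. Both inclusions follow from $I\subseteq I:f^\infty$ together with the idempotence of $(-):f^\infty$. Throughout write $f=x_1\cdots x_s$, and for a strongly connected digraph $H$ on a vertex set $W\subseteq\{1,\dots,n\}$ let $B_H=\langle K_i\mathbf{x}^{\mathbf{v}_j}-K_j\mathbf{x}^{\mathbf{v}_i}\mid i{\to}j\text{ an edge of }H\rangle$ denote the \emph{unsaturated} binomial ideal, so that $T_H=B_H:f^\infty$ in the notation of Section~\ref{application}.

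For part (1), I would apply the saturation fact to the defining formula \eqref{TGdef}: since each $T_{G_k}=B_{G_k}:f^\infty$, we get $T_G=(T_{G_1}+\cdots+T_{G_l}):f^\infty=(B_{G_1}+\cdots+B_{G_l}):f^\infty$, whereas $T_{\mathcal{A}}=T'_{\mathcal{A}}:f^\infty$ with $T'_{\mathcal{A}}$ the ``all same component'' binomial ideal of \eqref{Tprimedef}. Because every edge of $G_k$ has both endpoints in $A_k$, we have $B_{G_1}+\cdots+B_{G_l}\subseteq T'_{\mathcal{A}}$, which gives one containment after saturating. For the reverse containment it suffices, since $(-):f^\infty$ is a closure operator, to show that each generator $K_i\mathbf{x}^{\mathbf{v}_j}-K_j\mathbf{x}^{\mathbf{v}_i}$ of $T'_{\mathcal{A}}$, with $i,j$ in a common block $A_k$, lies in $(B_{G_1}+\cdots+B_{G_l}):f^\infty$. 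Here I use strong connectedness of $G_k$ to pick a directed path $i=i_0{\to}i_1{\to}\cdots{\to}i_m=j$ in $G_k$ and then apply, inductively along the path, the identity
\[
\mathbf{x}^{\mathbf{v}_b}\bigl(K_a\mathbf{x}^{\mathbf{v}_c}-K_c\mathbf{x}^{\mathbf{v}_a}\bigr)=\mathbf{x}^{\mathbf{v}_c}\bigl(K_a\mathbf{x}^{\mathbf{v}_b}-K_b\mathbf{x}^{\mathbf{v}_a}\bigr)+\mathbf{x}^{\mathbf{v}_a}\bigl(K_b\mathbf{x}^{\mathbf{v}_c}-K_c\mathbf{x}^{\mathbf{v}_b}\bigr).
\]
This shows $\mathbf{x}^{\mathbf{w}}\bigl(K_i\mathbf{x}^{\mathbf{v}_j}-K_j\mathbf{x}^{\mathbf{v}_i}\bigr)\in B_{G_k}$ for a monomial $\mathbf{x}^{\mathbf{w}}$ assembled from the intermediate $\mathbf{x}^{\mathbf{v}_{i_t}}$, $0<t<m$; since $\mathbf{x}^{\mathbf{w}}$ divides a power of $x_1\cdots x_s$, the generator lies in $B_{G_k}:f^\infty$, and part (1) follows.

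For part (2), observe that the complete directed graph on a nonempty set $A_k$ is strongly connected (trivially so if $|A_k|=1$), so the digraph $G$ built from the partition $\mathcal{A}$ is a legitimate weakly reversible network whose partition into connected components is exactly $\mathcal{A}$; part (1) then gives $T_G=T_{\mathcal{A}}$ immediately. (Alternatively, the edges of the complete digraph on $A_k$ are all ordered pairs of distinct vertices of $A_k$, so in this case $B_{G_1}+\cdots+B_{G_l}=T'_{\mathcal{A}}$ already before saturating, and the saturation fact finishes it.) The only step carrying real content is the reverse containment in part (1), i.e.\ upgrading from edge-binomials to arbitrary same-component binomials; the telescoping identity makes this routine once one clears the monomials $\mathbf{x}^{\mathbf{v}_{i_t}}$ by saturating at $x_1\cdots x_s$, so the main thing to get right is the bookkeeping between summing ideals and saturating, which the saturation fact handles. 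I should also double-check that the definition of $T_G$ for strongly connected $G$ recalled from \cite{TDS} ranges over the edges of $G$, as I have assumed; if it instead ranges over all pairs of vertices, the path argument becomes unnecessary and part (1) reduces directly to the saturation fact.
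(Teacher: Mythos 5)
Your proposal is correct, and its core is the same as the paper's: the paper's proof simply rewrites $T_G$ and $T_{\mathcal{A}}$ as saturations of sums of the ideals $T'_{A_k}$ (respectively of their partial saturations) and declares the resulting equality ``straightforward to prove''; your saturation lemma $\bigl(\sum_k (I_k:f^\infty)\bigr):f^\infty=\bigl(\sum_k I_k\bigr):f^\infty$ is precisely the content of that omitted step, and your verification of it is fine. The one place you go beyond the paper is the telescoping path argument. In this paper the convention for a strongly connected component is that the generators $K_i\mathbf{x}^{\mathbf{v}_j}-K_j\mathbf{x}^{\mathbf{v}_i}$ range over \emph{all pairs} of vertices of the component, not just the edges --- this is visible in the proof's identification $T_{G_k}=T'_{A_k}:\bigl(\prod_{i\in B_k}x_i\bigr)^\infty$ and in Example~\ref{Ex1revisit}, where the generator $K_3x_2-K_5x_1x_2$ corresponds to the non-edge pair $(3,5)$ --- so the path argument is not needed and part (1) does reduce directly to the saturation fact, as you anticipated in your closing caveat. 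That said, your telescoping identity is correct (it checks out by expansion, and $\mathbf{x}^{\mathbf{w}}$ indeed divides a power of $x_1\cdots x_s$), and it is exactly the argument one would need to justify Proposition~\ref{AddEdgeTG} and the edge-indexed reading of the definition in \cite{TDS}, so it is a worthwhile supplement rather than a flaw. A minor point you could absorb into your saturation lemma: the paper saturates $T'_{A_k}$ only at the variables occurring in the $k$th component, but since that product divides $x_1\cdots x_s$ the same two-line argument applies.
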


\begin{proof}
Assertion (1) is obvious when $G$ is strongly connected.  Now suppose that $G$ has two strongly connected components.  This induces a partition $\mathcal{A} = \{A_1,A_2\}$ of the vertices $\{1,\dots,n\}$ of $G$.  For $k=1,2$, define
\[
B_k = \{i \in \{1,\dots,s\} \mid x_i \text{ corresponds to a species appearing in } G_k\}.
\]
Then
\begin{align*}
T_G &= \big(T_{G_1} + T_{G_2}\big):(x_{1}\cdots x_{s})^{\infty}\\ &=
\big(T'_{A_{1}}\hskip-2pt:(\textstyle{\prod_{i\in B_1} x_i})^{\infty}+T'_{A_{2}}\hskip-2pt:(\textstyle{\prod_{i\in B_2} x_i})^{\infty}\big):(x_{1}\cdots x_{s})^{\infty}\\
T_\mathcal{A} &= (T'_{A_{1}}+T'_{A_{2}}):(x_{1}\cdots x_{s})^{\infty},
\end{align*}
where $T'_{A_{k}}=\langle K_{i}\mathbf{x}^{\mathbf{v}_{j}}-K_{j}\mathbf{x}^{\mathbf{v}_i} \mid i,j\in A_{k}\rangle$ for $k=1,2$. It is straightforward to prove that these ideals are equal.  The general case is similar, and then we are done since (2) follows from (1).
\end{proof}

We then have the following result.

\begin{theorem}\ 
\label{TTG}
\begin{enumerate}
\item $T_G$ is the defining ideal of the multi-Rees algebra  $\mathcal{R}_R(I_1{\oplus} \cdots {\oplus}I_l)$ for the monomial ideals of the partition given by the connected components of $G$.
\item $T_G$ is a toric ideal.
\end{enumerate}
\end{theorem}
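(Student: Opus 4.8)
The plan is to obtain both parts of the theorem as immediate consequences of the results already proved, with essentially no new work required. First I would invoke Proposition~\ref{TGisTApartition}(1): if $\mathcal{A}$ denotes the partition of the vertex set $V = \{1,\dots,n\}$ induced by the connected components $G_1,\dots,G_l$ of $G$, then $T_G = T_\mathcal{A}$. Since weak reversibility guarantees that each $G_k$ is strongly connected, the part $A_k$ of $\mathcal{A}$ is exactly the vertex set of $G_k$, and the monomial ideal $I_k = \langle \mathbf{x}^{\mathbf{v}_i} \mid i \in A_k\rangle$ of \eqref{monideals} is generated precisely by the monomials attached to the chemical complexes in the $k$th component, matching the setup in the statement of the theorem.

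For part (1), I would then apply Theorem~\ref{TisRees}, which identifies $T_\mathcal{A}$ with $\ker(\varphi)$, the defining ideal of $\mathcal{R}_R(I_1{\oplus}\cdots{\oplus}I_l)$ with respect to the presentation $\varphi$ of \eqref{phidef}; combining this with the equality $T_G = T_\mathcal{A}$ yields the claim. For part (2), I would apply Corollary~\ref{TisToric}, which asserts that $T_\mathcal{A}$ is a toric ideal---prime because it is the kernel of a map to an integral domain, and generated by binomials via the Gr\"obner basis elimination argument---and again conclude using $T_G = T_\mathcal{A}$.

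The only genuinely nontrivial ingredient is Proposition~\ref{TGisTApartition}, whose proof reconciles the definition of $T_G$ from \cite{TDS}---which saturates each component ideal $T_{G_k}$ separately before saturating the sum---with the definition of $T_\mathcal{A}$, which saturates the sum of the unsaturated ideals $T'_{A_k}$ in a single step. The point there is that the final saturation by $(x_1\cdots x_s)^\infty$ absorbs the intermediate partial saturations by the $\prod_{i\in B_k} x_i$, so neither the order nor the grouping of the saturations matters. That is the step where I would expect whatever effort the argument demands to be concentrated; but since Proposition~\ref{TGisTApartition} is already established, the proof of Theorem~\ref{TTG} itself reduces to stringing together the three cited results, and I would present it in exactly that order.
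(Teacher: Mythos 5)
Your proof is correct and follows exactly the paper's argument: both parts are obtained by combining the equality $T_G = T_\mathcal{A}$ from Proposition~\ref{TGisTApartition}(1) with Theorem~\ref{TisRees} and Corollary~\ref{TisToric}, respectively. Your observation that the only substantive content lies in reconciling the two saturation conventions (already handled in Proposition~\ref{TGisTApartition}) matches where the paper places the work.
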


\begin{proof}
(1) is an immediate consequence of $T_G = T_\mathcal{A}$ and Theorem~\ref{TisRees}, and then (2) follows from Corollary~\ref{TisToric}. 
\end{proof}

\begin{remark} In \cite{TDS}, part (2) of Theorem~\ref{TTG} is proved  in \cite[Prop.\ 6]{TDS} by a different argument, assuming that $G$ is strongly connected.  In the general case, when $T_G$ is defined by \eqref{TGdef}, \cite{TDS} implicitly assumes without proof that $T_G$ is toric.
\end{remark}

\begin{remark} The definition of chemical reaction network given at the start of the section allows the graph $G$ to have isolated vertices.  However, from the chemical perspective, the reactions are the heart of the matter.  Since these are the directed edges of $G$, in practice one typically assumes that every connected component of $G$ has at least one edge.  In our notation, this corresponds to $|A_k| > 1$ for all $k$.
\end{remark}

\subsection{Adding Edges} One observation is that the ideal $T_G$ has only a modest dependence on the edges of $G$, since $T_G$ is completely determined by the partition of the vertex set  $V = A_1 \cup  \cdots \cup A_l$.  For instance, suppose we add a new reaction $\ce{A + B ->[\kappa_{35}] B}$ to the network in Example~\ref{Ex1network}. This would give a larger directed graph $G'$ (one more edge), yet we have $T_{G'} = T_G$ since $G'$ and $G$ give the same partition of $V$.  

We record this observation more formally as follows:

\begin{proposition}
\label{AddEdgeTG}
In a chemical reaction network, adding a directed edge to $G$ that connects vertices within the same connected component has no effect on the toric ideal $T_G$. 
\end{proposition}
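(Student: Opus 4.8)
The plan is to reduce the statement immediately to Proposition~\ref{TGisTApartition}. Let $i \to j$ be the edge being added, where by hypothesis $i$ and $j$ lie in the same connected component of $G$, and let $G'$ denote the resulting directed graph. First I would note that $G$ and $G'$ have the same vertex set $V = \{1,\dots,n\}$ and carry the same exponent vectors $\mathbf{v}_1,\dots,\mathbf{v}_n$, since how a vertex is built from species is data attached to vertices rather than edges. The crucial observation is then that $G$ and $G'$ induce the \emph{same} partition of $V$ into connected components: because $i$ and $j$ already lay in a single connected component of $G$, adjoining the edge $i \to j$ neither merges two components nor creates a new one, so the connected components of $G'$ coincide with those of $G$.

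Next I would verify that $G'$ is still a reaction network in the sense of this section, i.e.\ that it is weakly reversible. This is automatic: each connected component of $G'$ has the same vertex set as the corresponding (strongly connected) component of $G$ together with at least as many directed edges, and adding edges can only enlarge the set of directed paths, so every component of $G'$ remains strongly connected. Hence Proposition~\ref{TGisTApartition}(1) applies to $G'$ just as it does to $G$.

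Finally, letting $\mathcal{A}$ be the common partition induced by $G$ and $G'$, Proposition~\ref{TGisTApartition}(1) gives $T_G = T_\mathcal{A}$ and $T_{G'} = T_\mathcal{A}$, so $T_{G'} = T_G$, as claimed. I expect no genuine obstacle here: the whole argument is the purely combinatorial remark that an edge internal to a connected component leaves the connected-component partition unchanged, after which the conclusion is a formal consequence of Proposition~\ref{TGisTApartition}. (Indeed, in the description of $T_{G_k}$ used in the proof of that proposition, with $G_k$ strongly connected, the relevant binomials $K_a \mathbf{x}^{\mathbf{v}_b} - K_b \mathbf{x}^{\mathbf{v}_a}$ range over \emph{all} pairs of vertices of the component rather than over its edges, so they are literally unchanged when an internal edge is adjoined, and the saturation is handled there as well; thus one could also give the argument without invoking Proposition~\ref{TGisTApartition}.)
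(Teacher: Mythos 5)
Your argument is correct and is essentially the paper's own: the paper derives this proposition from the observation (immediately preceding it) that $T_G$ depends only on the partition of $V$ into connected components, which is exactly your reduction to Proposition~\ref{TGisTApartition}(1). Your additional checks (that the exponent vectors and weak reversibility are preserved) are fine but routine, and the parenthetical remark about the binomials ranging over all pairs of vertices matches the paper's reasoning as well.
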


The ability to add edges is exploited in the proof of \cite[Prop.\ 6]{TDS}, where the authors replace $G$ (strongly connected in Prop.\ 6) with the complete directed graph on the vertices of $G$. We also use this fact the second part of Proposition \ref{TGisTApartition}.

\subsection{The Toric Moduli Space} Another important ideal defined in \cite{TDS} is the \emph{moduli ideal}
\[
M_G = T_G \cap \K[\mathbf{K}],
\]
whose variety parametrizes the toric dynamical systems which have a positive solution that is
balanced at the level of complexes.  More precisely, \cite[Thm.\ 7]{TDS} implies that the positive real points of this variety parametrize choices of reaction rates $\kappa_{ij}$ that give rise to a toric dynamical system. (The full story uses the formula for $K_i$ in terms of the $\kappa_{ij}$ coming from the Matrix Tree Theorem.  This is explained in \cite[Section 2]{TDS}.)  As noted in \cite{TDS}, $M_G$ is a toric ideal (which also follows easily from our results).

Proposition~\ref{AddEdgeTG} implies the following result about $M_G$.

\begin{corollary}
\label{AddEdgeMG}
In a chemical reaction network, adding a directed edge to $G$ that connects vertices within the same connected component has no effect on the moduli ideal $M_G$. 
\end{corollary}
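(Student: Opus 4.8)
The plan is to derive this immediately from Proposition~\ref{AddEdgeTG}. Let $G$ be the directed graph of a chemical reaction network, and let $G'$ be the graph obtained by adjoining one directed edge $i \to j$ whose endpoints $i,j$ lie in the same connected component of $G$. First I would observe that $G$ and $G'$ have the same vertex set and the same connected components, hence induce the same partition $\mathcal{A}$ of $V = \{1,\dots,n\}$. By Proposition~\ref{AddEdgeTG} (equivalently, by $T_G = T_\mathcal{A}$ from Proposition~\ref{TGisTApartition}(1) together with the fact that $T_\mathcal{A}$ depends only on $\mathcal{A}$), we get the equality of ideals $T_{G'} = T_G$ in $\K[\mathbf{x},\mathbf{K}]$.

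Then I would intersect both sides with the subring $\K[\mathbf{K}] \subseteq \K[\mathbf{x},\mathbf{K}]$, using the definition $M_G = T_G \cap \K[\mathbf{K}]$ recalled just above the corollary. This yields
\[
M_{G'} = T_{G'} \cap \K[\mathbf{K}] = T_G \cap \K[\mathbf{K}] = M_G,
\]
which is exactly the claim. There is no real obstacle: the entire mathematical content is already packaged in Proposition~\ref{AddEdgeTG}, and the passage from $T_G$ to $M_G$ is just the observation that intersecting equal ideals with a fixed subring produces equal ideals. (If one preferred, the same one-line argument shows more generally that $M_G$, like $T_G$, is an invariant of the partition $\mathcal{A}$ alone.)
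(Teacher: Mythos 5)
Your proposal is correct and matches the paper's intended argument: the paper states the corollary as an immediate consequence of Proposition~\ref{AddEdgeTG} (it gives no separate proof), and your deduction $M_{G'} = T_{G'} \cap \K[\mathbf{K}] = T_G \cap \K[\mathbf{K}] = M_G$ is exactly the implicit reasoning. Nothing is missing.
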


\subsection{The Stoichiometric Subspace and the Deficiency}  Another important player in this theory is the \emph{stoichiometric subspace}
\[
S = \mathrm{Span}_\R(\mathbf{v}_j-\mathbf{v}_i \mid i{\to}j \in E) \subseteq \R^s,
\]
and then the \emph{deficiency} of the network is defined to be
\begin{equation}
\label{deltadef}
\delta = n - l - \dim_{\R} S,
\end{equation}
where $n = \#$vertices of $G$ and $l = \#$connected components of $G$.  The observation is that neither of these is affected by adding an edge to $G$ that connects vertices within the same connected component (which is strongly connected by weak reversibility).  To see why $S$ is unchanged, let $i, j$ be vertices in the same connected component, and suppose $G$ has no directed edge from $i$ to $j$.  By strong connectivity, we have a sequence of directed edges
\[
i \to i' \to i'' \to \cdots \to i^{(\ell)} \to j.
\]
In the stoichiometric subspace $S$, these edges give the telescoping sum
\[
(\mathbf{v}_{i'} - \mathbf{v}_i) + (\mathbf{v}_{i''} -\mathbf{v}_{i'}) + \cdots + (\mathbf{v}_j -\mathbf{v}_{i^{(\ell)}}) = \mathbf{v}_j - \mathbf{v}_i,
\]
Thus $\mathbf{v}_j - \mathbf{v}_i \in S$, so adding a new directed edge $i{\to}j$ to $G$ has no effect on $S$ and hence on $\delta$ (since we change neither $l$ nor $S$).  Hence we have proved:

\begin{proposition}
\label{AddEdgeSdelta}
In a chemical reaction network, adding a directed edge to $G$ that connects vertices within the same connected component has no effect on the stoichiometric subspace $S$ and the deficiency $\delta$. 
\end{proposition}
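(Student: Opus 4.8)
The plan is to treat the two assertions about $S$ and $\delta$ separately, reducing the second to the first. First I would note that adding a single directed edge joining two vertices $i,j$ that already lie in a common connected component of $G$ changes neither the vertex set $V$ nor the partition of $V$ into connected components: no new vertex appears, and no two distinct components get merged. Consequently the integers $n$ and $l$ appearing in \eqref{deltadef} are unaffected, so once we know that $\dim_\R S$ is unchanged, it follows immediately that $\delta$ is unchanged.

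Thus the heart of the matter is to show that $S$ itself does not change. Since $S$ is the real span of the vectors $\mathbf{v}_q - \mathbf{v}_p$ over directed edges $p \to q$ of $G$, adjoining the edge $i \to j$ can only enlarge $S$, and it does so by at most the one vector $\mathbf{v}_j - \mathbf{v}_i$. So it suffices to check that $\mathbf{v}_j - \mathbf{v}_i$ already belongs to the span of the edge-differences of the \emph{original} graph $G$. Here weak reversibility enters: the connected component containing $i$ and $j$ is strongly connected, so there is a directed path $i = i_0 \to i_1 \to \cdots \to i_m = j$ in $G$. Summing the edge-differences along this path gives the telescoping identity
\[
\sum_{t=1}^{m} \bigl(\mathbf{v}_{i_t} - \mathbf{v}_{i_{t-1}}\bigr) = \mathbf{v}_{i_m} - \mathbf{v}_{i_0} = \mathbf{v}_j - \mathbf{v}_i,
\]
which exhibits $\mathbf{v}_j - \mathbf{v}_i$ as an element of $S$. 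Hence adding the edge leaves $S$, and therefore $\delta$, unchanged.

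The only place that requires any care is the appeal to strong connectivity to produce the directed path from $i$ to $j$, and this is a mild point rather than a genuine obstacle: strong connectivity of each connected component is precisely the weak-reversibility hypothesis built into the definition of a chemical reaction network at the start of Section~\ref{application}. Everything else is bookkeeping with the definition \eqref{deltadef}, so I would expect the write-up to be short.
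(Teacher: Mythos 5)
Your argument matches the paper's own proof essentially verbatim: both reduce the claim to showing $\mathbf{v}_j - \mathbf{v}_i$ already lies in $S$, and both obtain this from a directed path $i \to \cdots \to j$ (guaranteed by weak reversibility) via the same telescoping sum, with the statement about $\delta$ following because $n$, $l$, and $S$ are all unchanged. The proposal is correct and requires no further comment.
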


There is a nice relation between Corollary~\ref{AddEdgeMG} and Proposition~\ref{AddEdgeSdelta}.  By \cite[Thm.\ 9]{TDS}, the codimension of the moduli ideal $M_G$ equals the deficiency $\delta = n - l - \dim_{\R} S$.  So when we add an edge within a connected component, $M_G$ doesn't change by Corollary~\ref{AddEdgeMG}, which means that the codimension, hence the deficiency, doesn't change. Proposition~\ref{AddEdgeSdelta} gives the intrinsic reason why the deficiency is unchanged.

\section{The Cayley Matrix} 
\label{cayley}

A key tool used in the proof of \cite[Thm.\ 9]{TDS} is the Cayley matrix described as follows.  If we group the columns $\mathbf{v}_i$ of $Y$ according to which connected component of $G = G_1 \cup \cdots \cup G_l$ they lie in, and renumber appropriately, we can write $Y$ in the form
\[
Y = \big( Y_1 \mid Y_2 \mid \cdots \mid Y_l\big)
\]
where the column indices give the partition $\{1,\dots,n\} = A_1  \cup \cdots \cup A_l$ used to define the ideals $I_k$ from \eqref{monideals}.  Following \cite{TDS}, we add $l$ rows to the bottom of $Y$ to obtain the \emph{Cayley matrix}
\[
\mathrm{Cay}_G(Y) = \begin{pmatrix} Y_1 & Y_2 & \cdots & Y_l\\
\mathbf{1} & \mathbf{0} & \cdots & \mathbf{0}\\ 
\mathbf{0} & \mathbf{1} & \cdots & \mathbf{0}\\
\vdots & \vdots & \ddots & \vdots\\
\mathbf{0} & \mathbf{0} & \cdots & \mathbf{1}
\end{pmatrix},
\]
where $\mathbf{0}$ and $\mathbf{1}$ are row vectors of all 0's and all 1's respectively of suitable length.

The Cayley matrix is used in the proof of \cite[Thm.\ 9]{TDS}, which relates the codimension of $M_G$ to the deficiency $\delta$ from \eqref{deltadef}.  In \cite[Rem.\ 8]{TDS}, the authors observe a direct connection to deficiency:
\[
\delta = \mathrm{nullity}(\mathrm{Cay}_G(Y))
\]
In comments following the proof of  \cite[Thm.\ 9]{TDS}, the authors note that the moment map gives a bijection between the positive part of the variety of $M_G$ (which parametrizes toric dynamical systems) and the interior of the corresponding \emph{Cayley polytope}, which is the convex hull of the columns of $\mathrm{Cay}_G(Y)$.  Cayley polytopes have been studied extensively and have many applications. See, for example, \cite{DDRP,ito}.  

From our point of view, each row of $\mathrm{Cay}_G(Y)$ is associated to a variable:\ the first $s$ rows correspond to $x_1,\dots,x_s$, and the last $l$ rows correspond to the auxiliary variables $t_1,\dots,t_l$ used in the construction of the multi-Rees algebra.  Using these variables, the $i$th column of the Cayley matrix gives the monomial $\mathbf{x}^{\mathbf{v}_i} t_k$ when $i \in A_k$.  Since these monomials generate the multi-Rees algebra over $R$, we have proved the following result:

\begin{proposition}
\label{CayleyRees}
The monomials coming from the columns of the Cayley matrix $\mathrm{Cay}_G(Y)$ are $R$-algebra generators of the multi-Rees algebra $\mathcal{R}_R(I_1{\oplus} \cdots {\oplus}I_l)$.  
\end{proposition}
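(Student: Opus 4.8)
The statement to prove is Proposition~\ref{CayleyRees}, and the plan is essentially to unwind the definitions so that the assertion becomes a tautology. The only substantive content is the dictionary between the combinatorics of the Cayley matrix and the algebraic construction of the multi-Rees algebra set up in Section~\ref{notation}.

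First I would recall the presentation map $\varphi : R[\mathbf{K}] \to R[\mathbf{t}]$ from \eqref{phidef}, which sends $K_i$ to $\mathbf{x}^{\mathbf{v}_i} t_k$ when $i \in A_k$, and note that by construction the image of $\varphi$ is exactly $\mathcal{R}_R(I_1{\oplus} \cdots {\oplus}I_l)$; hence the monomials $\mathbf{x}^{\mathbf{v}_i} t_k$ (with $i \in A_k$), together with $R$, generate the multi-Rees algebra as an $R$-algebra. Next I would make precise the labeling convention for the rows of $\mathrm{Cay}_G(Y)$: the first $s$ rows are indexed by $x_1,\dots,x_s$ and the last $l$ rows by $t_1,\dots,t_l$, so that a column vector of $\mathrm{Cay}_G(Y)$ determines a monomial in $\K[\mathbf{x},\mathbf{t}]$ by reading off exponents against these variables.

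Then I would simply compute the monomial attached to the $i$th column. If $i \in A_k$, the top $s$ entries of that column are the entries of $\mathbf{v}_i$, and the bottom $l$ entries are the $k$th standard basis vector (a single $1$ in the row labeled $t_k$, and $0$ elsewhere), by the block structure of the Cayley matrix with its identity-type bottom block $(\mathbf{1}$'s on the appropriate block). Therefore the corresponding monomial is precisely $\mathbf{x}^{\mathbf{v}_i} t_k$. As $i$ ranges over $\{1,\dots,n\}$, these monomials are exactly the generators produced by $\varphi$, which completes the proof.

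There is no real obstacle here: the result is a bookkeeping statement, and the ``main step'' is just to state the row-labeling convention carefully enough that the column-to-monomial correspondence is unambiguous. If anything, the one point to be careful about is that different columns in the same block $A_k$ all contribute the same auxiliary variable $t_k$ but generally different $\mathbf{x}$-monomials $\mathbf{x}^{\mathbf{v}_i}$; this is consistent with the fact that $I_k$ is generated by all the $\mathbf{x}^{\mathbf{v}_i}$ with $i \in A_k$, so no generator is lost by passing to the Cayley matrix. Hence the columns of $\mathrm{Cay}_G(Y)$ give exactly the $R$-algebra generators of $\mathcal{R}_R(I_1{\oplus} \cdots {\oplus}I_l)$ coming from the presentation $\varphi$.
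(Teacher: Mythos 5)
Your proof is correct and follows exactly the argument the paper gives (in the paragraph preceding the proposition): label the first $s$ rows by $x_1,\dots,x_s$ and the last $l$ rows by $t_1,\dots,t_l$, read off the $i$th column as the monomial $\mathbf{x}^{\mathbf{v}_i}t_k$ for $i\in A_k$, and observe that these are precisely the images of the $K_i$ under $\varphi$, whose image is the multi-Rees algebra. No differences worth noting.
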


This proposition means that from the algebraic viewpoint, the Cayley matrix leads to the multi-Rees algebra, while from the geometric viewpoint, the same matrix leads to  the Cayley polytope.

But there is more to say, since the columns of any integer matrix, when regarded as exponent vectors, give a toric ideal in the usual way.  Here is how this works in our situation.

\begin{proposition}\
\label{cayleytoric}
\begin{enumerate}
\item $M_G$ is the toric ideal of the Cayley matrix $\mathrm{Cay}_G(Y)$.
\item $T_G$ the toric ideal of the modified Cayley matrix
\[
\begin{pmatrix} I_s &Y_1 & Y_2 & \cdots & Y_l\\
\mathbf{0} &\mathbf{1} & \mathbf{0} & \cdots & \mathbf{0}\\ 
\mathbf{0} &\mathbf{0} & \mathbf{1} & \cdots & \mathbf{0}\\
\vdots & \vdots & \vdots & \ddots & \vdots\\
\mathbf{0} &\mathbf{0} & \mathbf{0} & \cdots & \mathbf{1}
\end{pmatrix},
\] 
where $I_s$ is the $s\times s$ identity matrix. 
\end{enumerate}
\end{proposition}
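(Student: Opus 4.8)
The plan is to derive both parts from the characterization of toric ideals as kernels of monomial maps, together with the descriptions of $T_G$ and $M_G$ already established. Recall that for any integer matrix $B$ with columns $\mathbf{b}_1,\dots,\mathbf{b}_m$, the associated toric ideal is the kernel of the $\K$-algebra map sending the $i$th variable to the Laurent monomial with exponent vector $\mathbf{b}_i$; equivalently it is $\langle K_i\mathbf{x}^{\mathbf{v}_j}-K_j\mathbf{x}^{\mathbf{v}_i}\mid\dots\rangle$-type binomials saturated at the variables, which is exactly the shape of our ideals.

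First I would handle part (2), since part (1) will follow by intersecting with $\K[\mathbf{K}]$. The modified Cayley matrix has $s+l$ rows, which I match with the variables $x_1,\dots,x_s,t_1,\dots,t_l$, and $s+n$ columns, matching the variables $x_1,\dots,x_s,K_1,\dots,K_n$. Reading off the columns: the first $s$ columns give $x_i\mapsto x_i$ (from the $I_s$ block, with zeros in the $t$-rows), and the column indexed by $K_i$ with $i\in A_k$ gives the monomial $\mathbf{x}^{\mathbf{v}_i}t_k$ (from the $Y$-block on top and the single $1$ in the $k$th added row). So the toric ideal of the modified Cayley matrix is precisely $\ker(\varphi)$ for the map $\varphi$ of \eqref{phidef}. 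By Theorem~\ref{TisRees} and Proposition~\ref{TGisTApartition}(1), $\ker(\varphi)=T_\mathcal{A}=T_G$. This is the cleanest route; the only thing to verify carefully is that the standard toric-ideal construction for a matrix with some columns equal to standard basis vectors agrees with $\ker(\varphi)$, which is immediate once the variable-to-column dictionary is written down.

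For part (1), I would observe that the (ordinary) Cayley matrix $\mathrm{Cay}_G(Y)$ is obtained from the modified one by deleting the first $s$ columns, i.e. it is the submatrix on the columns indexed by $\mathbf{K}$. Its associated toric ideal is therefore $\ker(\varphi|_{\K[\mathbf{K}]})$, the kernel of the restriction of $\varphi$ to the subring $\K[\mathbf{K}]$. Now $\ker(\varphi|_{\K[\mathbf{K}]})=\ker(\varphi)\cap\K[\mathbf{K}]=T_G\cap\K[\mathbf{K}]=M_G$ by the definition of $M_G$. Alternatively, and perhaps worth remarking, this matches the general fact that the toric ideal of a matrix obtained by deleting the unit-vector columns is the elimination ideal of the full toric ideal with respect to the corresponding variables, which is exactly the passage from $T_G$ to $M_G$.

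The main obstacle is essentially bookkeeping rather than mathematics: one must be scrupulous about the ordering of rows and columns and about the claim that the monomials $\mathbf{x}^{\mathbf{v}_i}t_k$ and $x_i$ are algebraically independent enough that the toric ideal is really the whole kernel (this is where injectivity of $\varphi$ on $\K[\mathbf{x},K_{i_1},\dots,K_{i_l}]$, proved inside Theorem~\ref{TisRees}, and Corollary~\ref{TisToric}'s binomial generation are doing the work behind the scenes). I expect no genuine difficulty, so I would keep the proof short: set up the row/column-to-variable dictionary, identify the monomial map with $\varphi$ (resp.\ its restriction), and invoke Theorems~\ref{TisRees}, \ref{TTG}, Corollary~\ref{TisToric}, Proposition~\ref{TGisTApartition}, and the definition $M_G=T_G\cap\K[\mathbf{K}]$.
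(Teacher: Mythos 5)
Your proposal is correct and follows essentially the same route as the paper: read off the monomial map from the columns of each matrix, identify it with $\varphi$ (for the modified matrix) or its restriction to $\K[\mathbf{K}]$ (for $\mathrm{Cay}_G(Y)$), and conclude via Theorem~\ref{TisRees} and the definition $M_G = T_G \cap \K[\mathbf{K}]$. The only cosmetic difference is the order of the two parts, and your closing worries about ``the toric ideal really being the whole kernel'' are unnecessary, since the toric ideal of a matrix is by definition the kernel of the associated monomial map.
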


\begin{proof}
For (1), we noted above that the columns of $\mathrm{Cay}_G(Y)$ are $\mathbf{x}^{\mathbf{v}_i} t_k$ for $i \in A_k$, giving  the map
\begin{equation}
\label{phirestricted}
\K[\mathbf{K}] \longrightarrow \K[\mathbf{x},\mathbf{t}]
\end{equation}
that sends $K_i$ to $\mathbf{x}^{\mathbf{v}_i} t_k$ for $i \in A_k$.  This is the restriction to $\K[\mathbf{K}]$ of the map $\varphi$ defined in \eqref{phidef}.  By definition, the kernel of \eqref{phirestricted} is the toric ideal of  $\mathrm{Cay}_G(Y)$.  Using Theorem~\ref{TisRees} and the definition of $M_G$, we obtain
\[
\text{kernel of \eqref{phirestricted}} = \ker(\varphi)\cap \K[\mathbf{K}] = T_G \cap \K[\mathbf{K}] = M_G,
\]
which completes the proof of (1).  

For (2), the modified Cayley matrix gives the monomials $x_1,\dots,x_s$ (from the first $s$ columns) and $\mathbf{x}^{\mathbf{v}_i} t_k$ for $i \in A_k$ (from the remaining $n$ columns).  These monomials give  the map $\varphi$ from \eqref{phidef}, so that the toric ideal of the modified matrix is $\ker(\varphi)$, which by Theorem~\ref{TisRees} is precisely $T_G$. 
\end{proof}

\begin{remark}The proof of \cite[Thm.\ 9]{TDS} uses the \emph{extended Cayley matrix}
\[
\begin{pmatrix} -I_s &Y_1 & Y_2 & \cdots & Y_l\\
\mathbf{0} &\mathbf{1} & \mathbf{0} & \cdots & \mathbf{0}\\ 
\mathbf{0} &\mathbf{0} & \mathbf{1} & \cdots & \mathbf{0}\\
\vdots & \vdots & \vdots & \ddots & \vdots\\
\mathbf{0} &\mathbf{0} & \mathbf{0} & \cdots & \mathbf{1}
\end{pmatrix},
\]
and the authors comment that ``The toric ideal of this matrix is precisely the toric balancing ideal $T_G$.''  This puzzles us, for the minus sign in front of $I_s$ means that $x_i$ would map to $x_i^{-1}$.  Simple examples show that this does not give the same toric ideal as the modified Cayley matrix used in Proposition~\ref{cayleytoric}.  For instance, in the situation of Example~\ref{Ex1}, $T_G$ contains $K_1x_1-K_2$ (we will confirm this in Example~\ref{Ex1revisit} below), while the extended Cayley matrix gives a toric ideal that contains $K_1 - K_2x_1$ since $x_1 \mapsto x_1^{-1}$ implies $K_1 - K_2x_1 \mapsto (x_1t_1) - (x_1^2t_1)x_1^{-1} = 0$.
\end{remark}

\section{The Special Fiber}
\label{special}

In Section~\ref{application}, we saw that the moduli ideal $M_G = T_G \cap \K[\mathbf{K}]$ plays an important role in the theory of toric dynamical systems.  But for the ideals $T_\mathcal{A} \subseteq \K[\mathbf{x},\mathbf{K}]$ from Section~\ref{results}, one can ask if $T_\mathcal{A} \cap \K[\mathbf{K}]$ has a meaning from the Rees algebra point of view.  This is a reasonable question since $T_\mathcal{A}$ is the defining ideal of $\mathcal{R}_R(I_1{\oplus} \cdots {\oplus}I_l)$.  However, as we will soon see, the answer is \emph{sometimes yes, sometimes no}.  

To understand why, we need to recall the \emph{special fiber} of a Rees algebra.  For the multi-Rees algebra $\mathcal{R}_R(I_1{\oplus} \cdots {\oplus}I_l)$, the special fiber is the $\K$-algebra
\[
\mathcal{F}_\K(I_1{\oplus} \cdots {\oplus}I_l) = \mathcal{R}_R(I_1{\oplus} \cdots {\oplus}I_l) \otimes_R \K =  \mathcal{R}_R(I_1{\oplus} \cdots {\oplus}I_l)/\langle \mathbf{x}\rangle \mathcal{R}_R(I_1{\oplus} \cdots {\oplus}I_l),
\]
where we regard $\K$ as an $R$-algebra via $\K \simeq R/\langle \mathbf{x}\rangle$.  Here is one case where it is possible to compute the special fiber directly from the defining ideal $T_\mathcal{A}$:

\begin{proposition}
\label{degreegenscase}
Assume that $x_1,\dots,x_s$ have positive weights $q_1,\dots,q_s$ such that for all $k = 1,\dots,l$, the monomials $\mathbf{x}^{\mathbf{v}_i}$ for $i \in A_k$ all the same weighted degree $d_k$.  Then:
\begin{enumerate} 
\item $T_\mathcal{A} \subseteq \mathbb{K}[\mathbf{x},\mathbf{K}]$ is homogeneous with respect to $(\Z_{\ge0} \times \Z_{\ge0}^l)$-multigrading on $\mathbb{K}[\mathbf{x},\mathbf{K}]$ defined by
\[
\mathrm{mdeg}(x_j) = (q_j,\mathbf{0}),\ \mathrm{mdeg}(K_i) = (0,\mathbf{e}_k),\ i \in A_k,
\]
where where $\mathbf{e}_1,\dots,\mathbf{e}_l$ are the standard basis of $\Z_{\ge0}^l$.  
\item There is a natural $\K$-algebra isomorphism
\[
\K[\mathbf{K}]/(T_{\mathcal{A}} \cap \K[\mathbf{K}]) \simeq \mathcal{F}_{\mathbb{K}}(I_{1}{\oplus}\cdots{\oplus} I_{l}).
\]
\end{enumerate}
\end{proposition}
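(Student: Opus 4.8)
The plan is to reduce part~(2) to a purely ideal-theoretic statement about $T_{\mathcal{A}}$, using part~(1) as the crucial input. I would first dispose of part~(1) as follows: put on $R[\mathbf{t}]$ the $(\Z\times\Z^l)$-grading with $\mathrm{mdeg}(x_j)=(q_j,\mathbf{0})$ and $\mathrm{mdeg}(t_k)=(-d_k,\mathbf{e}_k)$. Since by hypothesis each monomial $\mathbf{x}^{\mathbf{v}_i}$ with $i\in A_k$ has weighted degree $d_k$, the element $\varphi(K_i)=\mathbf{x}^{\mathbf{v}_i}t_k$ has multidegree $(0,\mathbf{e}_k)$, so $\varphi$ is a graded ring homomorphism for the grading on $\K[\mathbf{x},\mathbf{K}]$ described in the statement; hence $T_{\mathcal{A}}=\ker(\varphi)$ is a homogeneous ideal. (Here positivity of the weights is not yet needed.)

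For part~(2), the first step is the formal identification
\[
\mathcal{F}_{\K}(I_1{\oplus}\cdots{\oplus}I_l)=\mathcal{R}_R(I_1{\oplus}\cdots{\oplus}I_l)/\langle\mathbf{x}\rangle\mathcal{R}_R(I_1{\oplus}\cdots{\oplus}I_l)\simeq\K[\mathbf{x},\mathbf{K}]/(T_{\mathcal{A}}+\langle\mathbf{x}\rangle)\simeq\K[\mathbf{K}]/\pi_0(T_{\mathcal{A}}),
\]
where $\pi_0\colon\K[\mathbf{x},\mathbf{K}]\to\K[\mathbf{K}]$ is the substitution $\mathbf{x}\mapsto\mathbf{0}$; this uses only Theorem~\ref{TisRees} and the definition of the special fiber, and the resulting isomorphism is visibly natural. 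Thus everything reduces to the equality of ideals $\pi_0(T_{\mathcal{A}})=T_{\mathcal{A}}\cap\K[\mathbf{K}]$ inside $\K[\mathbf{K}]$. One inclusion is immediate, since $\pi_0$ restricts to the identity on $\K[\mathbf{K}]$: any $f\in T_{\mathcal{A}}\cap\K[\mathbf{K}]$ satisfies $f=\pi_0(f)\in\pi_0(T_{\mathcal{A}})$.

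The reverse inclusion $\pi_0(T_{\mathcal{A}})\subseteq T_{\mathcal{A}}\cap\K[\mathbf{K}]$ is where positivity of the weights enters, and it is the step I expect to require the most care. By Corollary~\ref{TisToric}, $T_{\mathcal{A}}$ is toric, hence generated by pure-difference binomials $\mathbf{x}^{\mathbf{a}}\mathbf{K}^{\mathbf{b}}-\mathbf{x}^{\mathbf{c}}\mathbf{K}^{\mathbf{d}}$ with $\varphi(\mathbf{x}^{\mathbf{a}}\mathbf{K}^{\mathbf{b}})=\varphi(\mathbf{x}^{\mathbf{c}}\mathbf{K}^{\mathbf{d}})$; since $\varphi$ is graded, each such binomial is homogeneous, so the two monomials have equal multidegree, and comparing the $\Z$-components gives $q_1a_1+\cdots+q_sa_s=q_1c_1+\cdots+q_sc_s$. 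Because every $q_j>0$, this forces $\mathbf{a}=\mathbf{0}$ exactly when $\mathbf{c}=\mathbf{0}$. Hence for each generator either both $\mathbf{x}$-monomials are trivial --- in which case the generator already lies in $\K[\mathbf{K}]$ and is fixed by $\pi_0$ --- or both are nontrivial and $\pi_0$ kills the generator; in either case $\pi_0$ of the generator lies in $T_{\mathcal{A}}\cap\K[\mathbf{K}]$. Since $\pi_0(T_{\mathcal{A}})$ is generated by the images of these generators, the inclusion follows, completing part~(2). The main obstacle is exactly the verification that the ``mixed'' case $\mathbf{a}=\mathbf{0}$, $\mathbf{c}\neq\mathbf{0}$ cannot occur: this is what positivity of the weights guarantees and is the reason this hypothesis appears in Proposition~\ref{degreegenscase}; without it $\pi_0(T_{\mathcal{A}})$ can be strictly larger than $T_{\mathcal{A}}\cap\K[\mathbf{K}]$.
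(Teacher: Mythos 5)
Your proof is correct and follows essentially the same route as the paper: the reduction of part~(2) to the equality $\pi_0(T_{\mathcal{A}})=T_{\mathcal{A}}\cap\K[\mathbf{K}]$ is the paper's tensor-product exact sequence in different clothing, and the key step---binomial generators are homogeneous, so positivity of the weights forces $\mathbf{x}^{\mathbf{a}}=1$ exactly when $\mathbf{x}^{\mathbf{c}}=1$---is identical. The only (minor, and perfectly valid) variation is in part~(1), where you make $\varphi$ graded by assigning $\mathrm{mdeg}(t_k)=(-d_k,\mathbf{e}_k)$ instead of checking homogeneity of the generators of $T'_{\mathcal{A}}$ and of the saturation; your handling of the inclusion $T_{\mathcal{A}}\cap\K[\mathbf{K}]\subseteq\pi_0(T_{\mathcal{A}})$ via $f=\pi_0(f)$ is also slightly cleaner than the paper's appeal to minimal generators.
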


\begin{proof}
For (1), observe that the hypothesis of the proposition implies that 
\[
\mathrm{mdeg}( K_i \mathbf{x}^{\mathbf{v}_j}) = (d_k,\mathbf{e}_k)
\]
whenever $i,j \in A_k$.  It follows that the ideal 
\[
T'_{\mathcal{A}} :=  \langle K_i \mathbf{x}^{\mathbf{v}_j} - K_j \mathbf{x}^{\mathbf{v}_i} \mid i,j \in A_k \text{ for some } k\rangle 
\]
from \eqref{Tprimedef} is homogeneous with respect to this mutligrading.  Since $x_1\cdots x_s$ is also homogeneous, the same is true for 
\[
T'_{\mathcal{A}} : (x_1\cdots x_s)^\infty = T_{\mathcal{A}},
\]
where the final equality is from the proof of Theorem~\ref{TisRees}.  Hence we have proved that $T_{\mathcal{A}}$ is homogeneous with respect to the multigrading  on $\mathbb{K}[\mathbf{x},\mathbf{K}]$ given by $\mathrm{mdeg}$, proving (1).  

For (2), we use the homomorphism $\varphi$ defined in \eqref{phidef} and Theorem~\ref{TisRees} to obtain the short exact sequence
\[
0\longrightarrow T_{\mathcal{A}}\longrightarrow R[\mathbf{K}]\longrightarrow\mathcal{R}_{R}(I_{1}{\oplus}\cdots{\oplus} I_{l})\longrightarrow0.
\]
Then we tensor this sequence over $R$ with $R$-module $\mathbb{K}$ to
obtain the exact sequence
\begin{equation}
\label{TAtensorK}
 T_{\mathcal{A}}\otimes_{R}\mathbb{K}\longrightarrow\mathbb{K}[\mathbf{K}]\longrightarrow\mathcal{F}_{\mathbb{K}}(I_{1}{\oplus}\cdots{\oplus} I_{l})\longrightarrow0,
\end{equation}
and the isomorphism
\[
\mathcal{F}_{\mathbb{K}}(I_{1}{\oplus}\cdots{\oplus} I_{l})\simeq\mathbb{K}[\mathbf{K}]/\mathrm{im}(T_{\mathcal{A}}\otimes_{R}\mathbb{K}).
\]
We need to prove that $\mathrm{im}(T_{\mathcal{A}}\otimes_{R}\mathbb{K}) = T_{\mathcal{A}} \cap \K[\mathbf{K}]$.

We know $T_{\mathcal{A}} = \ker(\varphi)$ is generated by binomials of the form $\mathbf{x}^{\alpha_1} \mathbf{K}^{\beta_1} - \mathbf{x}^{\alpha_2} \mathbf{K}^{\beta_2}$.  By (1), these binomials must have the same multidegree (otherwise, each monomial would be in $\ker(\varphi)$, clearly impossible).  Hence $\mathbf{x}^{\alpha_1}$ and $\mathbf{x}^{\alpha_2}$ have the same weighted degree, so that in particular,  $\mathbf{x}^{\alpha_1} = 1$ if and only if $\mathbf{x}^{\alpha_2} = 1$.  Since $T_\mathcal{A} \to T_\mathcal{A} \otimes \K$ sends all $x_i$ to $0$, we see that the tensor product is generated by minimal generators of the form $\mathbf{K}^{\beta_1} - \mathbf{K}^{\beta_2}$, which are precisely the minimal generators of $T_\mathcal{A}\cap \K[\mathbf{K}]$.  This proves that image of the left-most map in \eqref{TAtensorK} is precisely $T_\mathcal{A}\cap \K[\mathbf{K}]$.
\end{proof}

\begin{remark} 
\label{BrunsConca}
The isomorphism $\mathcal{F}_{\mathbb{K}}(I_{1}{\oplus}\cdots{\oplus} I_{l}) \simeq \K[\mathbf{K}]/(T_{\mathcal{A}} \cap \K[\mathbf{K}])$ from Proposition~\ref{degreegenscase} is equivalent to saying that the special fiber can be identified with the $\K$-subalgebra of the multi-Rees algebra generated by $\mathbf{x}^{\mathbf{v}_i}t_k$ for $i \in A_k$.  In the standard graded case when $q_1 = \cdots = q_s =1$, this description of the special fiber is due to Bruns and Conca \cite[Remark 3.2(c)]{BC}.  We also note that the multigrading introduced in \cite[Section 3]{BC} is equivalent to the one defined in Proposition~\ref{degreegenscase} via an automorphism of $\Z_{\ge0} \times \Z_{\ge0}^l$.
\end{remark}

We now compute some examples of special fibers.

\begin{example}
\label{degreegensEx}
Here is an example from \cite{TDS}.  Consider the reaction network
\smallskip
\begin{center}
\begin{tabular}{cc}
{\psset{unit=.6pt}\begin{pspicture}(159,72)
\rput[bl](55,58){\ce{2A}}
\rput[bl](0,0){\ce{2B   <=>[\hskip10pt\kappa_{32}\hskip10pt][\kappa_{23}] A + B}}
\rput[bl]{45}(36,16){\small\ce{<=>[\kappa_{31}][\kappa_{13}]}}
\rput[bl]{-45}(73,46){\small\ce{<=>[\kappa_{12}][\kappa_{21}]}}
\end{pspicture}} & \quad \raisebox{20pt}{$Y = \begin{pmatrix} 2 & 1 & 0\\ 0 & 1 & 2\end{pmatrix}$.}
\end{tabular}
\end{center}

\noindent We have variables $x_1,x_2$ and the monomial ideal $I = \langle x_1^2, x_1 x_2, x_2^2 \rangle = \langle x_1,x_2\rangle^2$.  One computes that 
\begin{align*}
T_G &= \langle K_1 x_1 x_2 - K_2 x_1^2, K_1 x_2^2 - K_3 x_1^2, K_2 x_2^2 - K_3 x_1x_2\rangle : \langle x_1x_2\rangle^\infty\\
&=\langle K_3x_1 - K_2 x_2, K_2 x_1 - K_1 x_2, K_1 K_3- K_2^2\rangle.
\end{align*}
Then the moduli ideal is $M_G = T_G \cap \K[K_,K_2,K_3] = \langle K_1
K_3- K_2^2\rangle$.  Since $I$ is generated by monomials of
degree two, Proposition~\ref{degreegenscase} implies that the special fiber is
\[
\mathcal{F}_\K(I) \simeq \K[K_,K_2,K_3]/\langle K_1 K_3- K_2^2\rangle.
\]
Geometrically, this says that blowing up the plane $\A^2_\K$ at $I = \langle x_1,x_2\rangle^2$ (the square of the maximal ideal of the origin) has exceptional fiber given by the rational normal curve of degree $2$ in $\PP^2_\K$.  So in this case, the moduli ideal defines the special fiber.  
\end{example}

Here is an example of what can happen when the generators of the
monomial ideals have mixed degree.

\begin{example}
\label{Ex1revisit}
In the situation of Example~\ref{Ex1}, one computes that
\begin{align*}
T_G &= \langle K_1 x_1^2 - K_2 x_1, K_3 x_3 - K_4 x_1 x_2, K_3 x_2 - K_5 x_1 x_2, K_4 x_2 - K_5 x_3\rangle : (x_1x_2x_3)^\infty\\
&= \langle K_1 K_3 - K_2 K_5, K_4 x_2 - K_5 x_3, K_1 x_1-K_2, K_5 x_1-K_3 \rangle\\[2pt]
M_G &= T_G \cap \Q[K_1,\dots,K_5] = \langle K_1 K_3 - K_2 K_5\rangle.
\end{align*}
To understand the special fiber, we use \eqref{TAtensorK}, which here is the exact sequence
\[
T_G \otimes_R \K \longrightarrow \K[K_1,\dots,K_5] \longrightarrow \mathcal{F}_\K(I_1{\oplus}I_2) \longrightarrow 0.
\]
In $T_G \otimes_R \K$, the generators $K_1 K_3 {-} K_2 K_5, K_4 x_2 {-} K_5 x_3, K_1 x_1{-}K_2, K_5 x_1{-}K_3$ of $T_G$ map to  $K_1 K_3 {-} K_2 K_5, 0, {-}K_2, {-}K_3$ in $T_G \otimes_R \K$, so that the special fiber is
\[
\mathcal{F}_\K(I_1{\oplus}I_2) \simeq \K[K_1,\dots,K_5] /\langle K_2,K_3\rangle.
\]
In this case, the moduli ideal $M_G = \langle K_1 K_3 - K_2
K_5\rangle$ gives no information about the special fiber.  This
happens because we use $I_1 = \langle x_1,x_1^2\rangle$ and $I_2 =
\langle x_1x_2, x_3,x_2\rangle$ with mixed degree generating sets, which create the generators $K_1 x_1{-}K_2,K_5 x_1{-}K_3$ of $T_G$. 
\end{example}

\begin{remark}
\label{geometrytoric}
The ($\Z_{\ge0} \times \Z_{\ge0}^l$)-multigrading on $\mathbb{K}[\mathbf{x},\mathbf{K}]$ defined in part (1) of Proposition~\ref{degreegenscase} has a nice geometric interpretation.  For each $k$, the monomials $\mathbf{x}^{\mathbf{v}_i}$, $i \in A_k$, have the same weighted degree $d_k$ and hence define a rational map
\[
\PP(q_1,\dots,q_s) \dashrightarrow \PP^{|A_k|-1}.
\]
Combining these for $k =1,\dots,l$ gives the rational map
\begin{equation}
\label{Fdef}
F : \PP(q_1,\dots,q_s) \dashrightarrow \prod_{k=1}^l \PP^{|A_k|-1}.
\end{equation}
Then $T_\mathcal{A}$ is the defining ideal of the closure of the graph
\begin{equation}
\label{graphclosure}
\overline{\Gamma(F)} \subseteq \PP(q_1,\dots,q_s) \times \prod_{k=1}^l \PP^{|A_k|-1}
\end{equation}
The homogeneous coordinate ring of the product in \eqref{graphclosure} is $\K[\mathbf{x},\mathbf{K}]$ with the multigrading defined in part (1) of Proposition~\ref{degreegenscase}.  From this point of view, it is natural that $T_\mathcal{A}$ is homogeneous with respect to this multigrading.  Furthermore, $\K[\mathbf{x},\mathbf{K}]/T_\mathcal{A} \simeq \mathcal{R}_R(I_1{\oplus}\cdots{\oplus}I_l)$ shows that the  multi-Rees algebra is the homogeneous coordinate ring of the graph closure of the rational map \eqref{Fdef}.  The paper \cite{botbol} studies a special case of the map \eqref{Fdef} (with monomials replaced with polynomials).  

We also mention that this whole construction generalizes, where one can replace $\PP(q_1,\dots,q_s)$ with any complete toric variety $X_\Sigma$.  Here, we assume that the monomials $\mathbf{x}^{\mathbf{v}_i}$, $i \in A_k$, have the same degree in the total coordinate ring of $X_\Sigma$, which is graded by the class group $\mathrm{Cl}(X_\Sigma)$.  Proposition 5.1 continues to hold in this case because $1$ is the unique monomial of degree zero in $\mathrm{Cl}(X_\Sigma)$ since $X_\Sigma$ is complete.
\end{remark}

\section*{Acknowledgements} 

The collaboration that led to this paper began at the CBMS conference \emph{Applications of Polynomial Systems} at Texas Christian University in June 2018.  We are grateful to the Conference Board of the Mathematical Sciences and the National Science Foundation for supporting this conference and to the local organizers for making us feel so welcome. We also thank Bernd Ulrich and the referee for helpful comments. 

\bibliographystyle{amsplain}

\end{document}